\documentclass[11pt]{article}
\usepackage[utf8]{inputenc}
\usepackage[T1]{fontenc}
\usepackage{graphicx}
\usepackage{longtable}
\usepackage{rotating}
\usepackage[normalem]{ulem}
\usepackage{amsmath}
\usepackage{amssymb}
\usepackage{capt-of}
\usepackage{hyperref}

\usepackage{times} % times new roman font
\usepackage{xcolor} % for farger i xfig generert latex-kode
\usepackage{amsthm} % for å definere teorem environmthere is an algorithm for translating any quantifier-free first-order $L_{n,m}$ exthere is an algorithm for translating any quantifier-free first-order $L_{n,m}$ exthere is an algorithm for translating any quantifier-free first-order $L_{n,m}$ exthere is an algorithm for translating any quantifier-free first-order $L_{n,m}$ expression into a flat expressionpression into a flat expressionpression into a flat expressionpression into a flat expressionent o.l.

\theoremstyle{plain}
\newtheorem{theorem}{Theorem}[section]
\newtheorem{lemma}{Lemma}[section]
\newtheorem{corrollary}{Corrollary}[section]

\theoremstyle{definition}
\newtheorem{definition}{Definition}[section]
\newtheorem{example}{Example}[section]

\date{\today}
\title{A generalization of a representation of the integers modulo $p$, for the purpose of occasionally establishing the unsolvability of diophantine inequalities}
\author{André Rognes \\
  \small Oslo Metropolitan University \\
  \small \texttt{andre.rognes@oslomet.no}
}

\hypersetup{
 pdfauthor={André Rognes},
 pdftitle={A generalization of a representation of the integers modulo $n$},
 pdfkeywords={},
 pdfsubject={},
 pdfcreator={Emacs 30.2 (Org mode 9.7.11)}, 
 pdflang={English}}

\begin{document}

\maketitle

\begin{abstract} It is well known that if a diophantine equation turns out not to have a solution over the integers modulo $p$, for some $p$, then it does not have a solution over the integers per se. This is because the integers modulo $p$ are a homomorphic image of the integers. However, the integers modulo $p$ are of little use when faced with diophantine inequalities, as the homomorphic image of the less-than-relation is trivial. The purpose of the present paper is to introduce a way of gereralising a particular representation of the integers modulo $p$. The generalizations, novel to this paper, are in the form of decidable Lindenbaum-algebras, and allow for deciding whether given positive first-order formulas in the language of first-order arithmetic are solvable. Crucially if a system of diophantine inequalities turns out not to be solvable in one of the Lindenbaum-algebras, then it is not solvable over the standard integers.
\end{abstract}

\noindent \textbf{Keywords: } \\
11U09 Model theory (number-theoretic aspects), \\
03C90 Nonclassical models (Boolean-valued, sheaf, etc.), \\
03B25 Decidability of theories and sets of sentences \\
11D75 Diophantine inequalities, \\

\section{Introduction} It is well known that if a diophantine equation turns out not to have a solution over the integers modulo $p$, for some $p$, then it does not have a solution over the integers per se. This is because the integers modulo $p$ are a homomorphic image of the integers. However, the integers modulo $p$ are of little use for inequalities as the homomorphic image of the less-than-relation is trivial, i.e. all numbers are greater (and less) than every other number modulo $p$. The purpose of the present paper is to introduce a way of gereralising a particular representation of the integers modulo $p$. The generalizations, novel to this paper, rely on model-theoretic techniques and allow for deciding whether given positive first-order formulas in the language of first-order arithmetic are solvable in the generalized representation. Crucially if a system of diophantine inequalities turns out not to be solvable over one of the generalized representations, then it is not solvable over the standard integers. We think of the generalised representations as approximations to addition and multiplication of integers and for each $p$ there is a family of generalizations that lie between the integers and the integers modulo $p$.
%% If a diophantine equation that does not have a solution over the integers modulo $p$ it will not have a solutions over any of the generalizations between the integers and those modulo $p$.

\subsection{Formal languages}
For each natural number $p$ greater than one, not neccearily prime, we define two languages one more expressive than the other. The standard interpretation of the less expressive language is decidable whilst the other language contains diophantine equations and therefore the standard interpretation is undecidable, by the Matiyasevich – Robinson – Davis – Putnam theorem, \cite{zbMATH00545277}. 
\begin{definition}
Let $L_p^{min}$ be the first-order language based on the following:
\begin{description}
%% \item variables: $v_0, \ldots, v_{n-1}$ (we also use $x,y,z,u,v,w$ for readability)
\item a numeral for each for each natural number
\item a unary function symbol: $-$, used for negation
\item unary relation symbols: $R_0, \ldots, R_{p-1}$, used for residue classes mod $p$
\item a binary relation symbol: $<$, used for less than
\item a binary relation symbol: $=$, used for equality
\end{description}
\end{definition}
So the {\em standard interpretation} of this language maps each numeral
to the corresponding number, negation, $<$ and $=$ are as in school
and $R_0, R_1, \ldots$ denotes the set of integers modulo $0$, the set of
integers modulo $1$, ... respectively.
\begin{lemma} The standard interpretation of $L_p^{min}$ is decidable.
\end{lemma}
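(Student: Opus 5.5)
The structure $(\mathbb{Z}; 0,1,2,\ldots, -, R_0,\ldots,R_{p-1}, <, =)$ is a reduct of an expansion by definitions of Presburger arithmetic on the integers. So the plan is to reduce decidability of the standard interpretation of $L_p^{min}$ to Presburger's theorem. Here $R_i$ is read as the set of integers congruent to $i$ modulo $p$.

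The first step is to give, for each atomic symbol of $L_p^{min}$, a defining formula in the language $\{0,1,+,<\}$ over $\mathbb{Z}$. Concretely:
\begin{itemize}
\item the numeral $n$ is $1+\cdots+1$ ($n$ times), or $0$;
\item $-x=y$ is defined by $x+y=0$;
\item $R_i(x)$ is defined by $\exists q\,(x = q+\cdots+q+1+\cdots+1)$, with $p$ copies of $q$ and $i$ copies of $1$;
\item $<$ and $=$ are kept as they are.
\end{itemize}
Terms of $L_p^{min}$ are just iterated negations of variables or numerals. They are therefore eliminated easily: $-(-t)$ is $t$, and an atomic formula $s<t$ with $s=-x$ becomes $\exists u\,(u+x=0 \wedge u<t)$, and similarly for the other cases. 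This gives a computable translation $\varphi\mapsto\varphi^*$ from $L_p^{min}$-sentences to sentences of Presburger arithmetic over $\mathbb{Z}$. By induction on formulas, $\varphi$ is true in the standard interpretation iff $\varphi^*$ is true in $(\mathbb{Z};0,1,+,<)$.

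The second step is to invoke decidability of $\mathrm{Th}(\mathbb{Z};0,1,+,<)$. This is Presburger's theorem, classically proved by quantifier elimination after adding the divisibility predicates $D_k$. If one prefers the natural numbers version, $\mathbb{Z}$ is interpretable in $\mathbb{N}$ as pairs modulo the usual equivalence, or via sign and absolute value. Composing the translation with the Presburger decision procedure decides truth of $L_p^{min}$-sentences.

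The only point needing care is the bookkeeping of the term translation, which is routine. As an alternative that is self-contained, one can give a direct quantifier elimination for $L_p^{min}$ expanded by successor-type constants. Atomic formulas in one variable $x$ have the form $x<c$, $c<x$, $x=c$, or $R_i(x)$, possibly with $-x$, which can be rewritten. Each such formula defines a finite Boolean combination of intervals and residue classes. An existential over a conjunction of these is then decided by checking whether an interval with computable endpoints contains an element of a given residue class.

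I would cite Presburger as the main route.
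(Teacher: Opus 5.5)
Your argument is correct, but it is not the paper's route. The paper never goes through Presburger arithmetic. It codes integers as signed base-$p$ numerals, reading tuples in $(\{+,-\}\times p^*)^n$ synchronously. It builds a finite automaton for each atomic $L_p^{min}$ formula; base $p$ makes $R_i$ depend only on the sign and the last digit. It then invokes closure of such automata under Boolean operations and projection to get an automaton for every formula, so truth or satisfiability reduces to an emptiness check.

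You instead show that each symbol of $L_p^{min}$ is definable in $(\mathbb{Z};0,1,+,<)$, translate, and cite Presburger's theorem. Your version is shorter and rigorous modulo a classical citation. It also makes the extension in the paper's final remarks (symbols for $2x$, $3x+1$, \dots) immediate. The paper's version produces an explicit regular-language representative for every $L_p^{min}$-definable element of the Lindenbaum algebra. That suits its computational aims, such as testing $[\phi]\le[\psi]$ in an implementation. The two are closely related anyway, since B\"uchi's automata method is one of the standard proofs of Presburger's theorem.

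Your fallback quantifier-elimination sketch is looser than the main argument. In an elimination step the atomic formulas containing $x$ generally contain other variables too (e.g.\ $-x<y$). So the interval endpoints are terms $\pm y+k$, and ``the interval contains a point of residue class $i$'' must itself be expressed quantifier-free in those variables. Since you rest the proof on Presburger, this does not affect correctness.
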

\textbf{Proof:} Consider finite state machines that work over
$n$-tuples of signed numerals in base $p$, i.e. $(\{+,-\} \times p^*)^n$.
For each atomic formula it is sraight forward to construct an automaton
that reccognizes exactly the $n$-tuples that represent the variable
instansiations true of the formula.
Moreover it is well-known that once we have automata for each atomic formula
of a given first-order formula, then for each sub-formula we can construct
a finite state machine that recognises the corresponding interpretation.
\hfill Q.E.D.

\begin{definition}
We write $L_p$ for the language $L_p^{min}$ expanded with a
\begin{description}
\item a binary function symbol: $+$, used for addition
\item a binary function symbol: $\cdot$, used for multiplication
\end{description}
\end{definition}
We regard relation symbols such as $>$, $\leq$ and $\geq$ as  derived from
$<$ and $=$.
\begin{definition} A {\em positive formula} is a formula whose quantifiers and connectives are all amongst $\forall$, $\exists$, $\vee$ and $\wedge$. In particular they do not make use of negation or a combination of connectives that allowes us to express negation.
\end{definition}
In the standard interpretation of $L_p^{min}$ we may express
non-possitive formulas as positive ones. For example $x \neq y$ is equivalent to $x < y \vee y < x$.

In order to give the term diophantine inequality a presice meaning we add the following.
\begin{definition} We use the term {\em system of diophantine inequalities} or just {\em diophantine inequality} for positive formulas in the language $L_p$.
\end{definition}

Throughout we will interpret these languages in sub-algebras of the Lindenbaum algebra of the theory of formulas valid in the standard interpretation of $L_p$.
\subsection{Lindenbaum and Tarski}
\begin{definition} Let $Z$ be the set of $L_p$ formulas valid in the standard interpretation, i.e. true in every instansiation of the variables with integers. Denote by $L_p/Z$ the set of equvalence classes under the equivalence realation where $[\phi]$ and $[\psi]$ are the same if the bi-implication $\phi \leftrightarrow \psi$ is in $Z$. We refer to  $L_p/Z$ as the {\em Lindenbaum algebra of Z}.
\end{definition} 

The Lindenbaum algebra and the following few facts about it are well known. The mapping $\phi \mapsto [\phi]$ is a homomorphims with respect to the first-order connectives, for example  $[\exists x \phi] = \exists x [\phi]$. Moreover there is an isomorphism between the Lindenbaum algebra of $Z$ and the power set of instansiations of the variables in $L_Z$ with integers. The definition of the latter power set is due to A. Tarski, it underlies most of model-theory and we refer to mappings from our language to it as {\em tarskian interpretations}. Here, for example $[\phi \rightarrow \psi]$ corresponds to the interpretation of $\phi \rightarrow \psi$ which is valid iff $[\phi] < [\psi]$. We say that $[\phi]$ {\em is below} $[\psi]$ is this case. This in turn corresponds to the tarskian interpretation of $\phi$ being a subset of the interpretation of $\psi$. In the case of first-order quantifiers, this isomorphim maps $[\exists x \phi]$ to the least (hyper-) cylinder containing the interpretation of $\phi$.

\subsection{Flat formulas}
We are interested in the graphs of addition and multiplication. Also we are interested in relations that contain these graphs. Rather than introducing, a large number of relation symbols into our language we use the following notion.
\begin{definition}
$L_p$ formulas (including $L_p^{min}$ formulas) are said to be \emph{flat} if they are constructed from atomic formulas of one of the following 10 forms, using connectives (i. e. $\vee$,$\wedge$,$\neg$) and first-order quantifiers.
  \begin{description}
  \item $x = y$, $x < y$,
  \item $a = x$, $a < x$, for each numeral $a$,
   \item $-(x) = y$, $-(x) < y$,
  \item $x + y = z$, $x + y < z$,
  \item $x \cdot y = z$, $x \cdot y < z$.
\end{description}
\end{definition}

\begin{lemma} There is an algorithm for translating any quantifier-free first-order $L_p$ formula into a flat formula.
\end{lemma}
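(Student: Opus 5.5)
The idea is to introduce a fresh variable for every compound subterm and then quantify these variables away. The translation is equivalent in the standard interpretation, i.e.\ the output formula $\psi$ satisfies $[\phi]=[\psi]$ in $L_p/Z$. Note that the definition of flat formulas allows first-order quantifiers, so the output may contain quantifiers even though the input does not.

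The translation proceeds in three steps.

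\textbf{Step 1: residue predicates.} First I eliminate the residue predicates, since they do not appear among the ten flat atomic forms. An atom $R_i(t)$ is equivalent to
\[
\exists q\,\bigl(t = p\cdot q + i\bigr),
\]
where $p$ and $i$ are numerals. This is an atomic equation between terms, so after this step every atom has the form $s=t$ or $s<t$ with $s,t$ arbitrary $L_p$ terms.

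\textbf{Step 2: unnesting terms.} I define, by recursion on a term $t$, a conjunction $D_t(w)$ of flat atoms expressing ``$w$ is the value of $t$''. The auxiliary variables are fresh and are all existentially quantified except $w$. The clauses are:
\begin{itemize}
\item For a variable $x$: $D_x(w)$ is $x=w$.
\item For a numeral $a$: $D_a(w)$ is $a=w$.
\item For $-(s)$: $D_{-(s)}(w)$ is $\exists u\,(D_s(u)\wedge -(u)=w)$.
\item For $s+r$: $D_{s+r}(w)$ is $\exists u\,\exists v\,(D_s(u)\wedge D_r(v)\wedge u+v=w)$.
\item For $s\cdot r$: $D_{s\cdot r}(w)$ is defined in the same way, with $u\cdot v=w$ in place of $u+v=w$.
\end{itemize}
In each clause the variables $u,v$ are chosen fresh. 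By induction on $t$, in the standard interpretation $D_t(w)$ holds iff $w$ equals the value of $t$ under the given instantiation. The key fact used is that each operation is total and single-valued on the integers.

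\textbf{Step 3: rewriting atoms.} I replace each atom $s=t$ by
\[
\exists u\,\exists v\,\bigl(D_s(u)\wedge D_t(v)\wedge u=v\bigr),
\]
and each atom $s<t$ by the same formula with $u<v$ in place of $u=v$. This is equivalent because $u$ and $v$ are forced to be the values of $s$ and $t$. The connectives are left untouched, and the replacement commutes with them because it is an equivalence on atoms, which is the homomorphism property of $\phi\mapsto[\phi]$. The whole procedure is a syntactic recursion and hence an algorithm.

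The main obstacle, though mild, is whether ``flat'' formulas are permitted to contain the new existential quantifiers. Since the definition explicitly allows first-order quantifiers, they are. If one insisted on an existential-free or universal-free form under negation, one could instead use $\forall u\,\forall v\,(D_s(u)\wedge D_t(v)\rightarrow u=v)$. This is equivalent by functionality, so the existential form can be placed in positive positions and the universal form in negative ones; this would also preserve positivity of the input. Beyond that, the remaining care is choosing genuinely fresh variables at each step so that no capture occurs.
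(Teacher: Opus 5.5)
Your proof is correct and is essentially the paper's argument: the paper's proof is only the remark that the lemma follows by induction on the buildup of formulas, and your construction makes that induction explicit, unnesting each term into fresh existentially quantified variables via $D_t(w)$ and then replacing the formula atom by atom. Your Step 1, which eliminates $R_i(t)$ via $\exists q\,(t = p\cdot q + i)$, covers a detail the paper passes over, and under the paper's literal definition it is needed, since the residue predicates are not among the ten flat atomic forms.
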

\textbf{Proof:} This can be proven by induction on the buildup of formulas.
\hfill Q.E.D.

Since there is an algorithm for translating any first-order $L_p$ formula into a flat formula, these extend to interpretations of systems of diophantine (in-)equalities with coefficients in $I$.

\section{Interpretations}
We will now be defining ways of interpreting flat formulas that are, so to speak, approximations to the standard, or precise, interpretation. Due to the incompleteness theorem the precise interpretation of flat $L_p$ formulas in general is undecidable. The undecidability of positive $L_p$ formulas follows by the Matiyasevich – Robinson – Davis – Putnam theorem, \cite{zbMATH00545277}. Interpretations over the integers modulo $p$ are the least precise interpretations in this contexts. Therebetween are decidable interpretations that can be used to gain insight into solvabilitiy questions of diophantine inequalities.

\begin{definition} The {\em precise interpretation} is the mapping that maps each formula $\phi \in L_p$ to $[ \phi ]$ in $L_p/Z$
\end{definition}

In order to define interpretations of systems of diophantine inequalities that lie between the precise interpretation and interpretation over the integers modulo $p$ we introduce interpretations of addition and multiplication that lie between the standard interpretations and the interpretations over the integers modulo $p$. From now on we shall be using the symbol $\sigma$ for approximations of summation and $\pi$ for approximations of the product, more spesicically the graphs thereof.
\begin{definition}
Let $\sigma$ and $\pi$ be $L_p^{min}$ formulas with three free variables. Then $\sigma(x,y,z)$ and $\pi(x,y,z)$ are said to be {\em approxiamtions to the sum and product} if their interpretations lie above the sum and product in $L_p/Z$, i.e. if $[x + y = z] \leq [\sigma(x,y,z)]$ and $[x \cdot y = z] \leq [\pi(x,y,z)]$ 
\end{definition}
\begin{figure}[ht]
  \begin{center}
%%    \input{xfig/addition}
%
%  encoding: UTF-8 
%  encoding: UTF-8 
%
\setlength{\unitlength}{3947sp}%
\begin{picture}(4227,5047)(1189,-4198)
\thinlines
{\color[rgb]{0,0,0}\put(2101,-3601){\makebox(1.6667,11.6667){\small.}}
}%
{\color[rgb]{0,0,0}\put(2162,-3541){\makebox(1.6667,11.6667){\small.}}
}%
{\color[rgb]{0,0,0}\put(2222,-3481){\makebox(1.6667,11.6667){\small.}}
}%
{\color[rgb]{0,0,0}\put(4291,-3376){\makebox(1.6667,11.6667){\small.}}
}%
{\color[rgb]{0,0,0}\put(4389,-3481){\makebox(1.6667,11.6667){\small.}}
}%
{\color[rgb]{0,0,0}\put(4501,-3586){\makebox(1.6667,11.6667){\small.}}
}%
{\color[rgb]{0,0,0}\put(2221,-1232){\makebox(1.6667,11.6667){\small.}}
}%
{\color[rgb]{0,0,0}\put(2161,-1171){\makebox(1.6667,11.6667){\small.}}
}%
{\color[rgb]{0,0,0}\put(2101,-1111){\makebox(1.6667,11.6667){\small.}}
}%
{\color[rgb]{0,0,0}\put(4501,-1253){\makebox(1.6667,11.6667){\small.}}
}%
{\color[rgb]{0,0,0}\put(4704,-1059){\makebox(1.6667,11.6667){\small.}}
}%
{\color[rgb]{0,0,0}\put(4606,-1148){\makebox(1.6667,11.6667){\small.}}
}%
{\color[rgb]{0,0,0}\put(4209,837){\makebox(1.6667,11.6667){\small.}}
}%
{\color[rgb]{0,0,0}\put(2251,-2386){\line(-1, 0){1050}}
}%
{\color[rgb]{0,0,0}\put(4426,-2386){\vector( 1, 0){825}}
}%
{\color[rgb]{0,0,0}\put(3301,-1186){\vector( 0, 1){450}}
}%
{\color[rgb]{0,0,0}\put(3301,-3436){\line( 0,-1){750}}
}%
\put(3301,-2461){\makebox(0,0)[lb]{\smash{\fontsize{12}{14.4}\usefont{T1}{ptm}{m}{n}{\color[rgb]{0,0,0}0}%
}}}
\put(3601,-2461){\makebox(0,0)[lb]{\smash{\fontsize{12}{14.4}\usefont{T1}{ptm}{m}{n}{\color[rgb]{0,0,0}1}%
}}}
\put(3901,-2461){\makebox(0,0)[lb]{\smash{\fontsize{12}{14.4}\usefont{T1}{ptm}{m}{n}{\color[rgb]{0,0,0}2}%
}}}
\put(3301,-2161){\makebox(0,0)[lb]{\smash{\fontsize{12}{14.4}\usefont{T1}{ptm}{m}{n}{\color[rgb]{0,0,0}1}%
}}}
\put(3601,-2161){\makebox(0,0)[lb]{\smash{\fontsize{12}{14.4}\usefont{T1}{ptm}{m}{n}{\color[rgb]{0,0,0}2}%
}}}
\put(3901,-2161){\makebox(0,0)[lb]{\smash{\fontsize{12}{14.4}\usefont{T1}{ptm}{m}{n}{\color[rgb]{0,0,0}3}%
}}}
\put(3001,-1861){\makebox(0,0)[lb]{\smash{\fontsize{12}{14.4}\usefont{T1}{ptm}{m}{n}{\color[rgb]{0,0,0}1}%
}}}
\put(2701,-1861){\makebox(0,0)[lb]{\smash{\fontsize{12}{14.4}\usefont{T1}{ptm}{m}{n}{\color[rgb]{0,0,0}0}%
}}}
\put(3301,-1861){\makebox(0,0)[lb]{\smash{\fontsize{12}{14.4}\usefont{T1}{ptm}{m}{n}{\color[rgb]{0,0,0}2}%
}}}
\put(3601,-1861){\makebox(0,0)[lb]{\smash{\fontsize{12}{14.4}\usefont{T1}{ptm}{m}{n}{\color[rgb]{0,0,0}3}%
}}}
\put(3901,-1861){\makebox(0,0)[lb]{\smash{\fontsize{12}{14.4}\usefont{T1}{ptm}{m}{n}{\color[rgb]{0,0,0}4}%
}}}
\put(3601,-2761){\makebox(0,0)[lb]{\smash{\fontsize{12}{14.4}\usefont{T1}{ptm}{m}{n}{\color[rgb]{0,0,0}0}%
}}}
\put(3901,-2761){\makebox(0,0)[lb]{\smash{\fontsize{12}{14.4}\usefont{T1}{ptm}{m}{n}{\color[rgb]{0,0,0}1}%
}}}
\put(3901,-3061){\makebox(0,0)[lb]{\smash{\fontsize{12}{14.4}\usefont{T1}{ptm}{m}{n}{\color[rgb]{0,0,0}0}%
}}}
\put(2626,-2161){\makebox(0,0)[lb]{\smash{\fontsize{12}{14.4}\usefont{T1}{ptm}{m}{n}{\color[rgb]{0,0,0}-1}%
}}}
\put(2626,-2461){\makebox(0,0)[lb]{\smash{\fontsize{12}{14.4}\usefont{T1}{ptm}{m}{n}{\color[rgb]{0,0,0}-2}%
}}}
\put(2626,-2761){\makebox(0,0)[lb]{\smash{\fontsize{12}{14.4}\usefont{T1}{ptm}{m}{n}{\color[rgb]{0,0,0}-3}%
}}}
\put(2626,-3061){\makebox(0,0)[lb]{\smash{\fontsize{12}{14.4}\usefont{T1}{ptm}{m}{n}{\color[rgb]{0,0,0}-4}%
}}}
\put(2926,-2461){\makebox(0,0)[lb]{\smash{\fontsize{12}{14.4}\usefont{T1}{ptm}{m}{n}{\color[rgb]{0,0,0}-1}%
}}}
\put(2926,-2761){\makebox(0,0)[lb]{\smash{\fontsize{12}{14.4}\usefont{T1}{ptm}{m}{n}{\color[rgb]{0,0,0}-2}%
}}}
\put(2926,-3061){\makebox(0,0)[lb]{\smash{\fontsize{12}{14.4}\usefont{T1}{ptm}{m}{n}{\color[rgb]{0,0,0}-3}%
}}}
\put(3226,-2761){\makebox(0,0)[lb]{\smash{\fontsize{12}{14.4}\usefont{T1}{ptm}{m}{n}{\color[rgb]{0,0,0}-1}%
}}}
\put(3226,-3061){\makebox(0,0)[lb]{\smash{\fontsize{12}{14.4}\usefont{T1}{ptm}{m}{n}{\color[rgb]{0,0,0}-2}%
}}}
\put(3526,-3061){\makebox(0,0)[lb]{\smash{\fontsize{12}{14.4}\usefont{T1}{ptm}{m}{n}{\color[rgb]{0,0,0}-1}%
}}}
\put(3001,-2161){\makebox(0,0)[lb]{\smash{\fontsize{12}{14.4}\usefont{T1}{ptm}{m}{n}{\color[rgb]{0,0,0}0}%
}}}
\put(5401,-2461){\makebox(0,0)[lb]{\smash{\fontsize{12}{14.4}\usefont{T1}{ptm}{m}{n}{\color[rgb]{0,0,0}x}%
}}}
\put(3301,-586){\makebox(0,0)[lb]{\smash{\fontsize{12}{14.4}\usefont{T1}{ptm}{m}{n}{\color[rgb]{0,0,0}y}%
}}}
\put(4201,-1861){\makebox(0,0)[lb]{\smash{\fontsize{12}{14.4}\usefont{T1}{ptm}{m}{n}{\color[rgb]{0,0,0}5}%
}}}
\put(3001,-1561){\makebox(0,0)[lb]{\smash{\fontsize{12}{14.4}\usefont{T1}{ptm}{m}{n}{\color[rgb]{0,0,0}2}%
}}}
\put(3301,-1561){\makebox(0,0)[lb]{\smash{\fontsize{12}{14.4}\usefont{T1}{ptm}{m}{n}{\color[rgb]{0,0,0}3}%
}}}
\put(3601,-1561){\makebox(0,0)[lb]{\smash{\fontsize{12}{14.4}\usefont{T1}{ptm}{m}{n}{\color[rgb]{0,0,0}4}%
}}}
\put(3901,-1561){\makebox(0,0)[lb]{\smash{\fontsize{12}{14.4}\usefont{T1}{ptm}{m}{n}{\color[rgb]{0,0,0}5}%
}}}
\put(4201,-1561){\makebox(0,0)[lb]{\smash{\fontsize{12}{14.4}\usefont{T1}{ptm}{m}{n}{\color[rgb]{0,0,0}6}%
}}}
\put(4201,-2161){\makebox(0,0)[lb]{\smash{\fontsize{12}{14.4}\usefont{T1}{ptm}{m}{n}{\color[rgb]{0,0,0}4}%
}}}
\put(4201,-2761){\makebox(0,0)[lb]{\smash{\fontsize{12}{14.4}\usefont{T1}{ptm}{m}{n}{\color[rgb]{0,0,0}2}%
}}}
\put(2701,-1561){\makebox(0,0)[lb]{\smash{\fontsize{12}{14.4}\usefont{T1}{ptm}{m}{n}{\color[rgb]{0,0,0}1}%
}}}
\put(3226,-3361){\makebox(0,0)[lb]{\smash{\fontsize{12}{14.4}\usefont{T1}{ptm}{m}{n}{\color[rgb]{0,0,0}-3}%
}}}
\put(3526,-3361){\makebox(0,0)[lb]{\smash{\fontsize{12}{14.4}\usefont{T1}{ptm}{m}{n}{\color[rgb]{0,0,0}-2}%
}}}
\put(3826,-3361){\makebox(0,0)[lb]{\smash{\fontsize{12}{14.4}\usefont{T1}{ptm}{m}{n}{\color[rgb]{0,0,0}-1}%
}}}
\put(4201,-3361){\makebox(0,0)[lb]{\smash{\fontsize{12}{14.4}\usefont{T1}{ptm}{m}{n}{\color[rgb]{0,0,0}0}%
}}}
\put(2401,-1561){\makebox(0,0)[lb]{\smash{\fontsize{12}{14.4}\usefont{T1}{ptm}{m}{n}{\color[rgb]{0,0,0}0}%
}}}
\put(2326,-1861){\makebox(0,0)[lb]{\smash{\fontsize{12}{14.4}\usefont{T1}{ptm}{m}{n}{\color[rgb]{0,0,0}-1}%
}}}
\put(2326,-2161){\makebox(0,0)[lb]{\smash{\fontsize{12}{14.4}\usefont{T1}{ptm}{m}{n}{\color[rgb]{0,0,0}-2}%
}}}
\put(2326,-2461){\makebox(0,0)[lb]{\smash{\fontsize{12}{14.4}\usefont{T1}{ptm}{m}{n}{\color[rgb]{0,0,0}-3}%
}}}
\put(2326,-2761){\makebox(0,0)[lb]{\smash{\fontsize{12}{14.4}\usefont{T1}{ptm}{m}{n}{\color[rgb]{0,0,0}-4}%
}}}
\put(2326,-3061){\makebox(0,0)[lb]{\smash{\fontsize{12}{14.4}\usefont{T1}{ptm}{m}{n}{\color[rgb]{0,0,0}-5}%
}}}
\put(2326,-3361){\makebox(0,0)[lb]{\smash{\fontsize{12}{14.4}\usefont{T1}{ptm}{m}{n}{\color[rgb]{0,0,0}-6}%
}}}
\put(2626,-3361){\makebox(0,0)[lb]{\smash{\fontsize{12}{14.4}\usefont{T1}{ptm}{m}{n}{\color[rgb]{0,0,0}-5}%
}}}
\put(2926,-3361){\makebox(0,0)[lb]{\smash{\fontsize{12}{14.4}\usefont{T1}{ptm}{m}{n}{\color[rgb]{0,0,0}-4}%
}}}
\put(4201,-2461){\makebox(0,0)[lb]{\smash{\fontsize{12}{14.4}\usefont{T1}{ptm}{m}{n}{\color[rgb]{0,0,0}3}%
}}}
\put(4201,-3061){\makebox(0,0)[lb]{\smash{\fontsize{12}{14.4}\usefont{T1}{ptm}{m}{n}{\color[rgb]{0,0,0}1}%
}}}
\end{picture}%

%%%%%%%%%%%%%%%%%%%%%%%%%%%%
  \end{center}
  \caption{The addition table with regions compising values of $z$ that are all equal, less or grater than $0$, $x$ or $y$}
  \label{fig:addition}
\end{figure}

\begin{figure}[ht]
  \begin{center}  
%%    \input{xfig/multiplication}

%
%  encoding: UTF-8 
%
\setlength{\unitlength}{3947sp}%
\begingroup\makeatletter\ifx\SetFigFont\undefined%
\gdef\SetFigFont#1#2#3#4#5{%
  \reset@font\fontsize{#1}{#2pt}%
  \fontfamily{#3}\fontseries{#4}\fontshape{#5}%
  \selectfont}%
\fi\endgroup%
\begin{picture}(3552,3507)(1639,-3958)
\thinlines
{\color[rgb]{0,0,0}\put(2026,-2386){\line(-1, 0){375}}
}%
{\color[rgb]{0,0,0}\put(3346,-3571){\line( 0,-1){375}}
}%
{\color[rgb]{0,0,0}\put(3338,-1111){\vector( 0, 1){450}}
}%
{\color[rgb]{0,0,0}\put(4426,-1336){\makebox(1.6667,11.6667){\small.}}
}%
{\color[rgb]{0,0,0}\put(4501,-1261){\makebox(1.6667,11.6667){\small.}}
}%
{\color[rgb]{0,0,0}\put(4576,-1186){\makebox(1.6667,11.6667){\small.}}
}%
{\color[rgb]{0,0,0}\put(4426,-3436){\makebox(1.6667,11.6667){\small.}}
}%
{\color[rgb]{0,0,0}\put(4501,-3511){\makebox(1.6667,11.6667){\small.}}
}%
{\color[rgb]{0,0,0}\put(4576,-3586){\makebox(1.6667,11.6667){\small.}}
}%
{\color[rgb]{0,0,0}\put(2176,-3436){\makebox(1.6667,11.6667){\small.}}
}%
{\color[rgb]{0,0,0}\put(2101,-3511){\makebox(1.6667,11.6667){\small.}}
}%
{\color[rgb]{0,0,0}\put(2026,-3586){\makebox(1.6667,11.6667){\small.}}
}%
{\color[rgb]{0,0,0}\put(2176,-1336){\makebox(1.6667,11.6667){\small.}}
}%
{\color[rgb]{0,0,0}\put(2101,-1261){\makebox(1.6667,11.6667){\small.}}
}%
{\color[rgb]{0,0,0}\put(2026,-1186){\makebox(1.6667,11.6667){\small.}}
}%
{\color[rgb]{0,0,0}\put(4651,-2386){\vector( 1, 0){450}}
}%
\put(3301,-2461){\makebox(0,0)[lb]{\smash{{\SetFigFont{12}{14.4}{\rmdefault}{\mddefault}{\updefault}{\color[rgb]{0,0,0}0}%
}}}}
\put(3901,-1861){\makebox(0,0)[lb]{\smash{{\SetFigFont{12}{14.4}{\rmdefault}{\mddefault}{\updefault}{\color[rgb]{0,0,0}4}%
}}}}
\put(3301,-586){\makebox(0,0)[lb]{\smash{{\SetFigFont{12}{14.4}{\rmdefault}{\mddefault}{\updefault}{\color[rgb]{0,0,0}y}%
}}}}
\put(2626,-2461){\makebox(0,0)[lb]{\smash{{\SetFigFont{12}{14.4}{\rmdefault}{\mddefault}{\updefault}{\color[rgb]{0,0,0}0}%
}}}}
\put(2926,-2461){\makebox(0,0)[lb]{\smash{{\SetFigFont{12}{14.4}{\rmdefault}{\mddefault}{\updefault}{\color[rgb]{0,0,0}0}%
}}}}
\put(3601,-2461){\makebox(0,0)[lb]{\smash{{\SetFigFont{12}{14.4}{\rmdefault}{\mddefault}{\updefault}{\color[rgb]{0,0,0}0}%
}}}}
\put(3901,-2461){\makebox(0,0)[lb]{\smash{{\SetFigFont{12}{14.4}{\rmdefault}{\mddefault}{\updefault}{\color[rgb]{0,0,0}0}%
}}}}
\put(3301,-2161){\makebox(0,0)[lb]{\smash{{\SetFigFont{12}{14.4}{\rmdefault}{\mddefault}{\updefault}{\color[rgb]{0,0,0}0}%
}}}}
\put(3301,-1861){\makebox(0,0)[lb]{\smash{{\SetFigFont{12}{14.4}{\rmdefault}{\mddefault}{\updefault}{\color[rgb]{0,0,0}0}%
}}}}
\put(3601,-2161){\makebox(0,0)[lb]{\smash{{\SetFigFont{12}{14.4}{\rmdefault}{\mddefault}{\updefault}{\color[rgb]{0,0,0}1}%
}}}}
\put(3601,-1861){\makebox(0,0)[lb]{\smash{{\SetFigFont{12}{14.4}{\rmdefault}{\mddefault}{\updefault}{\color[rgb]{0,0,0}2}%
}}}}
\put(3901,-2161){\makebox(0,0)[lb]{\smash{{\SetFigFont{12}{14.4}{\rmdefault}{\mddefault}{\updefault}{\color[rgb]{0,0,0}2}%
}}}}
\put(2926,-2761){\makebox(0,0)[lb]{\smash{{\SetFigFont{12}{14.4}{\rmdefault}{\mddefault}{\updefault}{\color[rgb]{0,0,0}1}%
}}}}
\put(2926,-3061){\makebox(0,0)[lb]{\smash{{\SetFigFont{12}{14.4}{\rmdefault}{\mddefault}{\updefault}{\color[rgb]{0,0,0}2}%
}}}}
\put(2626,-2761){\makebox(0,0)[lb]{\smash{{\SetFigFont{12}{14.4}{\rmdefault}{\mddefault}{\updefault}{\color[rgb]{0,0,0}2}%
}}}}
\put(2626,-3061){\makebox(0,0)[lb]{\smash{{\SetFigFont{12}{14.4}{\rmdefault}{\mddefault}{\updefault}{\color[rgb]{0,0,0}4}%
}}}}
\put(2551,-2161){\makebox(0,0)[lb]{\smash{{\SetFigFont{12}{14.4}{\rmdefault}{\mddefault}{\updefault}{\color[rgb]{0,0,0}-2}%
}}}}
\put(2551,-1861){\makebox(0,0)[lb]{\smash{{\SetFigFont{12}{14.4}{\rmdefault}{\mddefault}{\updefault}{\color[rgb]{0,0,0}-4}%
}}}}
\put(2851,-2161){\makebox(0,0)[lb]{\smash{{\SetFigFont{12}{14.4}{\rmdefault}{\mddefault}{\updefault}{\color[rgb]{0,0,0}-1}%
}}}}
\put(2851,-1861){\makebox(0,0)[lb]{\smash{{\SetFigFont{12}{14.4}{\rmdefault}{\mddefault}{\updefault}{\color[rgb]{0,0,0}-2}%
}}}}
\put(3301,-3061){\makebox(0,0)[lb]{\smash{{\SetFigFont{12}{14.4}{\rmdefault}{\mddefault}{\updefault}{\color[rgb]{0,0,0}0}%
}}}}
\put(3301,-2761){\makebox(0,0)[lb]{\smash{{\SetFigFont{12}{14.4}{\rmdefault}{\mddefault}{\updefault}{\color[rgb]{0,0,0}0}%
}}}}
\put(3526,-3061){\makebox(0,0)[lb]{\smash{{\SetFigFont{12}{14.4}{\rmdefault}{\mddefault}{\updefault}{\color[rgb]{0,0,0}-2}%
}}}}
\put(3526,-2761){\makebox(0,0)[lb]{\smash{{\SetFigFont{12}{14.4}{\rmdefault}{\mddefault}{\updefault}{\color[rgb]{0,0,0}-1}%
}}}}
\put(3826,-2761){\makebox(0,0)[lb]{\smash{{\SetFigFont{12}{14.4}{\rmdefault}{\mddefault}{\updefault}{\color[rgb]{0,0,0}-2}%
}}}}
\put(3826,-3061){\makebox(0,0)[lb]{\smash{{\SetFigFont{12}{14.4}{\rmdefault}{\mddefault}{\updefault}{\color[rgb]{0,0,0}-4}%
}}}}
\put(3601,-1561){\makebox(0,0)[lb]{\smash{{\SetFigFont{12}{14.4}{\rmdefault}{\mddefault}{\updefault}{\color[rgb]{0,0,0}3}%
}}}}
\put(3901,-1561){\makebox(0,0)[lb]{\smash{{\SetFigFont{12}{14.4}{\rmdefault}{\mddefault}{\updefault}{\color[rgb]{0,0,0}6}%
}}}}
\put(4201,-1561){\makebox(0,0)[lb]{\smash{{\SetFigFont{12}{14.4}{\rmdefault}{\mddefault}{\updefault}{\color[rgb]{0,0,0}9}%
}}}}
\put(4201,-1861){\makebox(0,0)[lb]{\smash{{\SetFigFont{12}{14.4}{\rmdefault}{\mddefault}{\updefault}{\color[rgb]{0,0,0}6}%
}}}}
\put(4201,-2161){\makebox(0,0)[lb]{\smash{{\SetFigFont{12}{14.4}{\rmdefault}{\mddefault}{\updefault}{\color[rgb]{0,0,0}3}%
}}}}
\put(4201,-2461){\makebox(0,0)[lb]{\smash{{\SetFigFont{12}{14.4}{\rmdefault}{\mddefault}{\updefault}{\color[rgb]{0,0,0}0}%
}}}}
\put(2251,-1561){\makebox(0,0)[lb]{\smash{{\SetFigFont{12}{14.4}{\rmdefault}{\mddefault}{\updefault}{\color[rgb]{0,0,0}-9}%
}}}}
\put(2551,-1561){\makebox(0,0)[lb]{\smash{{\SetFigFont{12}{14.4}{\rmdefault}{\mddefault}{\updefault}{\color[rgb]{0,0,0}-6}%
}}}}
\put(2851,-1561){\makebox(0,0)[lb]{\smash{{\SetFigFont{12}{14.4}{\rmdefault}{\mddefault}{\updefault}{\color[rgb]{0,0,0}-3}%
}}}}
\put(3301,-1561){\makebox(0,0)[lb]{\smash{{\SetFigFont{12}{14.4}{\rmdefault}{\mddefault}{\updefault}{\color[rgb]{0,0,0}0}%
}}}}
\put(2251,-1861){\makebox(0,0)[lb]{\smash{{\SetFigFont{12}{14.4}{\rmdefault}{\mddefault}{\updefault}{\color[rgb]{0,0,0}-6}%
}}}}
\put(2251,-2161){\makebox(0,0)[lb]{\smash{{\SetFigFont{12}{14.4}{\rmdefault}{\mddefault}{\updefault}{\color[rgb]{0,0,0}-3}%
}}}}
\put(2251,-2461){\makebox(0,0)[lb]{\smash{{\SetFigFont{12}{14.4}{\rmdefault}{\mddefault}{\updefault}{\color[rgb]{0,0,0}0}%
}}}}
\put(2251,-2761){\makebox(0,0)[lb]{\smash{{\SetFigFont{12}{14.4}{\rmdefault}{\mddefault}{\updefault}{\color[rgb]{0,0,0}3}%
}}}}
\put(2251,-3061){\makebox(0,0)[lb]{\smash{{\SetFigFont{12}{14.4}{\rmdefault}{\mddefault}{\updefault}{\color[rgb]{0,0,0}6}%
}}}}
\put(2251,-3361){\makebox(0,0)[lb]{\smash{{\SetFigFont{12}{14.4}{\rmdefault}{\mddefault}{\updefault}{\color[rgb]{0,0,0}9}%
}}}}
\put(2551,-3361){\makebox(0,0)[lb]{\smash{{\SetFigFont{12}{14.4}{\rmdefault}{\mddefault}{\updefault}{\color[rgb]{0,0,0}6}%
}}}}
\put(3301,-3361){\makebox(0,0)[lb]{\smash{{\SetFigFont{12}{14.4}{\rmdefault}{\mddefault}{\updefault}{\color[rgb]{0,0,0}0}%
}}}}
\put(2926,-3361){\makebox(0,0)[lb]{\smash{{\SetFigFont{12}{14.4}{\rmdefault}{\mddefault}{\updefault}{\color[rgb]{0,0,0}3}%
}}}}
\put(3526,-3361){\makebox(0,0)[lb]{\smash{{\SetFigFont{12}{14.4}{\rmdefault}{\mddefault}{\updefault}{\color[rgb]{0,0,0}-3}%
}}}}
\put(4126,-2761){\makebox(0,0)[lb]{\smash{{\SetFigFont{12}{14.4}{\rmdefault}{\mddefault}{\updefault}{\color[rgb]{0,0,0}-3}%
}}}}
\put(4126,-3061){\makebox(0,0)[lb]{\smash{{\SetFigFont{12}{14.4}{\rmdefault}{\mddefault}{\updefault}{\color[rgb]{0,0,0}-6}%
}}}}
\put(4126,-3361){\makebox(0,0)[lb]{\smash{{\SetFigFont{12}{14.4}{\rmdefault}{\mddefault}{\updefault}{\color[rgb]{0,0,0}-9}%
}}}}
\put(3826,-3361){\makebox(0,0)[lb]{\smash{{\SetFigFont{12}{14.4}{\rmdefault}{\mddefault}{\updefault}{\color[rgb]{0,0,0}-6}%
}}}}
\put(5176,-2461){\makebox(0,0)[lb]{\smash{{\SetFigFont{12}{14.4}{\rmdefault}{\mddefault}{\updefault}{\color[rgb]{0,0,0}x}%
}}}}
\end{picture}%

%%%%%%%%%%%%%%%%%%%%%%%%%%%%%%%%%%%%%%%%%%%%%%%%%%%%%%%%%%%%%%%%
  \end{center}
  \caption{The multiplication table with regions comprising values of $z$ that are all equal, less or grater than $0$, $x$ or $y$}
  \label{fig:multiplication}
\end{figure}

The following we may refer to as the trivial approximations. They give rise to a representation of the integers modulo $p$.
\begin{example} If $\sigma$ and $\pi$ are tautologies with three free varables, then they are approximations to the sum and product.
\end{example}
The next example allows us to interpret diophantine inequalities in a non-trivial way.
\begin{example} \label{minimal}
  Let $\sigma(x,y,z)$ and $\pi(x,y,z)$ be the following two formulas respecively.
  \begin{description}
  \item $((0 < y \wedge x \leq z) \vee (y = 0 \wedge x = z) \vee (y < 0 \wedge z \leq x))$. To convince ourselves that this is an approximation to the sum, the formula is designed so as to contain the graph of the addition function, see figure \ref{fig:addition}.
  \item $((0 < y \wedge x \leq z) \vee (y = 0 \wedge z = 0) \vee (y < 0 \wedge -(x) \leq z))$
    To convince ourselves that this is an approximation to the product, the formula $\pi$ is designed so as to contain the graph of the multiplication function, see figure \ref{fig:multiplication}
  \end{description}
\end{example}

Since addition and multiplication are commutative, in the previous example we can let $x$ and $y$ change roles. Moreover we can take the intersection of of
the previous example and the version with the role change as follows.
\begin{theorem} \label{intersection} The following two $L_p^{min}$-formulas are approximations to the sum and product, respectively.
  \begin{description}
\item $((0 < y \wedge x \leq z) \vee (y = 0 \wedge x = z) \vee (y < 0 \wedge z \leq x))$ $\wedge$ $((0 < x \wedge y \leq z) \vee (x = 0 \wedge y = z) \vee (x < 0 \wedge z \leq y))$
\item  $((0 < y \wedge x \leq z) \vee (y = 0 \wedge 0 = z) \vee (y < 0 \wedge -(x) \leq z))$
  $\wedge$
  $(0 < x \wedge y \leq z) \vee (x = 0 \wedge 0 = z) \vee (x < 0 \wedge -(y) \leq z)$
\end{description}
  \end{theorem}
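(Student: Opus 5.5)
The plan is to reduce Theorem \ref{intersection} to Example \ref{minimal}, and then to verify the example's claims by splitting on the sign of $y$. The key observation is lattice-theoretic. In $L_p/Z$, or equivalently in the power set of integer instantiations via the Tarskian isomorphism, $[\phi \wedge \psi]$ is the meet of $[\phi]$ and $[\psi]$. Hence if $[x+y=z] \leq [\sigma(x,y,z)]$ and $[x+y=z] \leq [\sigma(y,x,z)]$, then $[x+y=z] \leq [\sigma(x,y,z)\wedge\sigma(y,x,z)]$, and similarly for $\pi$. Both conjuncts are $L_p^{min}$-formulas in the three free variables, so their conjunction is too.

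So it suffices to prove two things for each of $\sigma$ and $\pi$. The first is that the formula from Example \ref{minimal} lies above the graph. The second is that the version with $x$ and $y$ interchanged also lies above the graph. The second follows from the first by commutativity: the substitution $x \leftrightarrow y$ is an automorphism of the set of instantiations, it maps $[x+y=z]$ to $[y+x=z]=[x+y=z]$, and likewise $[x\cdot y=z]$ to itself. It also preserves $\leq$ in $L_p/Z$, so the swapped formula lies above the same graph.

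The remaining work is the pointwise check that every integer triple $(x,y,xy)$ or $(x,y,x+y)$ satisfies the relevant disjunction, split by the trichotomy $0<y$, $y=0$, $y<0$. For the sum:
\begin{itemize}
\item if $y>0$ then $x \leq x+y$;
\item if $y=0$ then $z=x$;
\item if $y<0$ then $x+y \leq x$.
\end{itemize}
Each case matches one disjunct directly. For the product with $y=0$ we have $z=0$, which is the middle disjunct. For $y\neq 0$ the disjuncts must be compared with $xy$ using $|y|\geq 1$, and here I would further split on the sign of $x$.

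I expect the product case to be the main obstacle, and I would check it carefully before trusting it. When $x \geq 0$ and $y>0$ we have $x \leq xy$, and when $x\geq 0$ and $y<0$ the required inequalities follow from $|y|\geq1$. When $x<0$ and $y>0$, however, $xy \leq x$, so the disjunct $x \leq z$ can fail: for instance $x=-2$, $y=2$ gives $z=-4$. I would therefore test these sign cases against Figure \ref{fig:multiplication} first. If the printed disjuncts do not cover them, the argument above still shows that the statement holds as soon as each of the two conjuncts contains the graph of multiplication. The honest conclusion in that event is that the sum part of Theorem \ref{intersection} is proved by the argument above, while the product part requires the disjuncts of Example \ref{minimal} to be sign-corrected (for example $|x| \leq |z|$ when $y \neq 0$) before the same meet argument applies verbatim.
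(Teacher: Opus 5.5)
Your reduction is the same as the paper's. Its entire proof is that each formula is a conjunction of two instances of Example \ref{minimal} (one with $x$ and $y$ swapped), and that a meet of approximations is an approximation. Your lattice argument and the $x\leftrightarrow y$ symmetry make this precise, and the sum part is correct.

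On the product you have found a real error in the statement, so assert it rather than leave it conditional on Figure \ref{fig:multiplication}. That figure is just the true multiplication table; it shows $-4$ at $(x,y)=(-2,2)$. Take $(x,y,z)=(-2,2,-4)$:
the first conjunct, since $y>0$, reduces to $x\leq z$, i.e.\ $-2\leq -4$;
the second, since $x<0$, reduces to $-(y)\leq z$, again $-2\leq -4$.
Both fail, so $[x\cdot y=z]\not\leq[\pi(x,y,z)]$ for the theorem's product formula $\pi$. This holds however one reads the unparenthesised second conjunct. The product half of Theorem \ref{intersection} is therefore false. The paper's proof does not rescue it, because it rests entirely on the unchecked claim in Example \ref{minimal} that $\pi$ contains the graph of multiplication.

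One step of your own case analysis is also wrong. For $x\geq 0$ and $y<0$ you have $z=xy=-x|y|\leq -x$, so $|y|\geq 1$ gives the reverse of the required $-(x)\leq z$; the triple $(2,-2,-4)$ violates it. In fact the first conjunct fails exactly when $x$ and $y$ have opposite signs and $|y|\geq 2$. So both nonzero branches of $\pi$ are defective, not only the $y>0$ one.

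Your proposed repair does work. The formula $(y\neq 0\wedge |x|\leq |z|)\vee(y=0\wedge z=0)$ is expressible in $L_p^{min}$ and contains the graph. So does a sign-sensitive variant:
for $y>0$, require $x\leq z$ when $x\geq 0$ and $z\leq x$ when $x<0$;
for $y<0$, require $z\leq -(x)$ when $x\geq 0$ and $-(x)\leq z$ when $x<0$.
With either formula in place of $\pi$, your meet-and-swap argument goes through verbatim.
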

\textbf{Proof:} Both of the approximations are conjunctions of two instansiations of the previous examples. Also the intersection of two approximations is an approximation.
\hfill{Q.E.D}

The following defines coarse interpretations, note that flat atomic formulas are interpreted as relations that are not necessarily functions.
\begin{definition} %% Let $\sigma$ and $\pi$ be $L^{min}$ formulas (used to denote substitutes for addition and multiplication respectively).
  Let $\sigma$ and $\pi$ be approximations to the sum and product. Then the {\em coarse interpretation based on $\sigma$ and $\pi$}, denoted $\mathfrak{C}_{\sigma,\pi}$ or just $\mathfrak{C}$ when $\sigma,\pi$ are clear from the context, maps each formula $\phi \in L_p^{min}$ to $[ \phi ]$ in $L_p/Z$, furthermore for the rest of $L_p$:
  \begin{description}
  %% \item $(s(x) = y)^{\mathfrak{C}} = \left[ \displaystyle\bigvee_{0 \leq a < p} (R_a(x) \wedge R_{(a+1 \mod p)}(y) ) \right] \wedge \left[ x \leq y \right]$
  %% \item $(s(x) < y)^{\mathfrak{C}} = \exists z ((s(x) = z)^{\mathfrak{C}} \wedge [z < y]^{\mathfrak{C}})$
  %% \item $(s(x) > y)^{\mathfrak{C}} = \exists z ((s(x) = z)^{\mathfrak{C}} \wedge [z > y]^{\mathfrak{C}})$
  \item $(x + y = z)^{\mathfrak{C}} = \left[ \displaystyle\bigvee_{0 \leq a < p} \left( \bigvee_{0 \leq b < p} R_a(x) \wedge R_b(y) \wedge R_{a + b \mod p}(z) \right) \right] \wedge \left[ \sigma(x,y,z) \right]$
  \item $(x + y < z)^{\mathfrak{C}} =  \exists u ((x + y = u)^{\mathfrak{C}} \wedge [u < z])$
%%  \item $(x + y > z)^{\mathfrak{C}} =  \exists u ((x + y = u)^{\mathfrak{C}} \wedge [u > z])$
  \item $(x \cdot y = z)^{\mathfrak{C}} = \left[ \displaystyle\bigvee_{0 \leq a < p} \left( \bigvee_{0 \leq b < p} (R_a(x) \wedge R_b(y) \wedge R_{a \cdot b \mod p}(z)) \right) \right] \wedge [\pi(x,y,z)]$
  \item $(x \cdot y < z)^{\mathfrak{C}} =  \exists u ((x \cdot y = u)^{\mathfrak{C}} \wedge [u < z])$
%%  \item $(x \cdot y > z)^{\mathfrak{C}} =  \exists u ((x \cdot y = u)^{\mathfrak{C}} \wedge [u > z])$
  \item The first-order connectives $\vee$, $\wedge$, $\neg$, $\exists$ and $\forall$ are defined fairly straight forwardly, so for example $(\phi \vee \psi)^{\mathfrak{C}} = \phi^{\mathfrak{C}} \vee \psi^{\mathfrak{C}}$ and $(\exists x \phi)^{\mathfrak{C}} = \exists x (\phi)^{\mathfrak{C}}$ in the latter case $(\phi)^{\mathfrak{C}} = [\psi]$ for some $\psi \in L_p^{min}$ whilst $\phi \in L_p$. 

  %% \item Addition is defined using a family formulas  $\{ P_a(x,y,z) \}_{-p < a < p}$
  %%   \begin{itemize}
  %%   \item $P_0 (x,y,z)$ denotes the formula $y = 0 \wedge x = z$
  %%   \item If $a \geq 0$ then $P_{a + 1}(x,y,z)$ denotes the formula $y > 0 \wedge \exists v w (s(v) = y \wedge s(w) = z \wedge P_a(x,v,w))$
  %%   \item If $a \leq 0$ then $P_{a - 1}(x,y,z)$ denotes the formula $y < 0 \wedge \exists v w (s(y) = v \wedge s(z) = w \wedge P_a(x,v,w))$
  %%   \item Now $(x + y = z)$ denotes the formula $\displaystyle\bigvee_{-p < a < p} \left[ P_a (x,y,z) \right]$
  %%   \end{itemize}
  %%   Likewise multiplication is defined using a family of formulas $\{ T_a \}_{-p < a < p}$
  %%   \begin{itemize}
  %%   \item $T_0(x,y,z)$ denotes the formula $y = 0 \wedge z = 0$
  %%   \item If $a \geq 0$ then $T_{a + 1}(x,y,z)$ denotes the formula $(y > 0 \wedge \exists v w (s(v) = y \wedge w + x = z \wedge T_a(x,v,w)$).
  %%     This is the flat version of $y > 0 \wedge \exists v (x \cdot v + x = z \wedge x \cdot s(v) = z)$
  %%   %% TODO \item If $a \leq 0$ then $T_{a - 1}(x,y,z)$ denotes the formula $(y < 0 \wedge \exists v w (s(y) = v \wedge (x + z = w)^C \wedge T_a(x,v,w))$. This is the flat version of $y < 0 \wedge x + $
  %%   \end{itemize}
  \end{description}
  
\end{definition}

\section{The main results}
The first result is on computationally checking whether given positive formulas are satisfiable or not in the proposed generalisations of the integers modulo $p$.
\begin{theorem}   Let $\sigma$ and $\pi$ be approximations to the sum and product. Then the coarse interpretation based on $\sigma$ and $\pi$ is decidable.
  %% That is to say that there is a decicion procedure for deciding wheter the coarse interpretation sends a given formula to the bottom of $L_p/Z$.
\end{theorem}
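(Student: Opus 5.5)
The plan is to reduce the coarse interpretation to the standard interpretation of $L_p^{min}$, which is decidable by the first lemma of the paper (the automata argument over signed base-$p$ numerals). Concretely, I will give an algorithm that, for every $L_p$ formula $\phi$, computes an $L_p^{min}$ formula $\phi^{*}$ such that $\phi^{\mathfrak{C}} = [\phi^{*}]$ in $L_p/Z$. Deciding $\phi^{\mathfrak{C}}$, i.e.\ deciding whether it is the top or bottom of $L_p/Z$, or whether a given tuple of integers lies in its tarskian interpretation, then amounts to deciding the corresponding property of $\phi^{*}$ in the standard interpretation of $L_p^{min}$. For example, $\phi^{\mathfrak{C}}$ is the bottom iff $\exists \bar{x}\, \phi^{*}$ is false in the standard interpretation.

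First I apply the lemma on flat formulas to translate $\phi$ into a flat formula. I will take for granted that the coarse interpretation is meant to be computed on this flat form, as the definition of $\mathfrak{C}$ only covers the flat atomic forms. Next I define $\phi^{*}$ by recursion on the flat formula.

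For atomic formulas of $L_p^{min}$ (the six forms $x=y$, $x<y$, $a=x$, $a<x$, $-(x)=y$, $-(x)<y$) I set $\phi^{*}=\phi$. For $x+y=z$ I set $\phi^{*}$ to the finite disjunction $\bigvee_{a,b<p} (R_a(x)\wedge R_b(y)\wedge R_{a+b \bmod p}(z))$ conjoined with $\sigma(x,y,z)$. This is an $L_p^{min}$ formula because $\sigma$ is one by the definition of approximations to the sum and product. For $x+y<z$ I set $\phi^{*}$ to $\exists u((x+y=u)^{*}\wedge u<z)$, and I treat multiplication analogously with $\pi$.

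For the connectives and quantifiers I commute $^{*}$ through, so $(\phi\vee\psi)^{*}=\phi^{*}\vee\psi^{*}$, $(\neg\phi)^{*}=\neg\phi^{*}$, $(\exists x\phi)^{*}=\exists x\,\phi^{*}$, and so on. By induction on the build-up, $\phi^{\mathfrak{C}}=[\phi^{*}]$. The base cases are literally the clauses of the definition of $\mathfrak{C}$. The inductive steps use the fact, recalled in the Lindenbaum–Tarski subsection, that $\psi\mapsto[\psi]$ is a homomorphism for the connectives and quantifiers, e.g.\ $[\exists x\psi]=\exists x[\psi]$. The construction of $\phi^{*}$ is plainly effective, since it only performs syntactic substitutions and builds a disjunction of fixed size $p^2$.

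The main obstacle is making precise what ``decidable'' means for a map into $L_p/Z$. The elements of $L_p/Z$ in general are not computably comparable, because $Z$ itself is undecidable. I would resolve this by observing that the image of $\mathfrak{C}$ lies in the subalgebra $\{[\psi]:\psi\in L_p^{min}\}$. On that subalgebra, equality, the order $[\psi]\le[\chi]$, and membership of integer tuples are all decidable. This is because $[\psi]\le[\chi]$ iff $\forall\bar{x}(\psi\rightarrow\chi)$ holds in the standard interpretation of $L_p^{min}$, which the automata lemma decides. A secondary point to check is that the result does not depend on the choice of flattening. It suffices to note that this does not matter for decidability, since the algorithm fixes one flattening and the statement concerns $\mathfrak{C}$ as so computed.
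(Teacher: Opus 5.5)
Your proposal is correct and is essentially the paper's own argument: the paper's one-line proof observes that $\mathfrak{C}$ sends every $L_p$ formula to the class of an $L_p^{min}$ formula, and then appeals to the decidability of the standard interpretation of $L_p^{min}$. Your explicit recursive translation $\phi\mapsto\phi^{*}$ with $\phi^{\mathfrak{C}}=[\phi^{*}]$ makes that step precise, and so does your clarification that decidability refers to the $L_p^{min}$-definable subalgebra of $L_p/Z$, where $[\psi]\le[\chi]$ reduces to an $L_p^{min}$ sentence; both fill in what the paper leaves implicit.
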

{\bf Proof:} This follows since the coarse interpretation maps $L_p$ formulas to $L_p^{min}$ formulas and since $L_p^{min}$ is decidable.
\hfill Q.E.D.

The second result is a formal way of stating that the coarse interpretations are approximations to the standard interpretation.
\begin{theorem} Let $\sigma$ and $\pi$ be approximations to the sum and product. Then the precise interpretation of a positive $L_p$ formula, is equal to or below the coarse interpretation (based on $\sigma$ and $\pi$) in the Lindenbaum algebra, $L_p/Z$.
\end{theorem}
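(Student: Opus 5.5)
Since the coarse interpretation is only defined on flat formulas, I will first use the translation lemma to assume the positive formula is flat. The translation introduces existential witnesses (for instance, $x+y<z$ becomes $\exists u(x+y=u \wedge u<z)$) and uses only $\wedge$, $\vee$, $\exists$. So positivity is preserved, and the translated formula has the same precise interpretation in $L_p/Z$.

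The proof is then an induction on the construction of a positive flat formula $\phi$. The claim is $[\phi] \leq \phi^{\mathfrak{C}}$, read as inclusion of the sets of integer instantiations satisfying the formula.

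\textbf{Atomic cases.} For atomic formulas of $L_p^{min}$ (that is, $x=y$, $x<y$, $a=x$, $a<x$, $-(x)=y$, $-(x)<y$) the coarse interpretation is by definition $[\phi]$, so equality holds. For $x+y=z$, take an integer instantiation satisfying it. Let $a,b$ be the residues of $x,y$ mod $p$. Then $z \equiv a+b \pmod p$, so the disjunct $R_a(x)\wedge R_b(y)\wedge R_{a+b \bmod p}(z)$ holds. Since $\sigma$ is an approximation to the sum, $[x+y=z]\leq[\sigma(x,y,z)]$. Hence $[x+y=z]$ lies below the meet of the two classes, which is $(x+y=z)^{\mathfrak{C}}$. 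Multiplication is handled identically, using $ab \bmod p$ and $\pi$. For $x+y<z$, write $[x+y<z] = [\exists u(x+y=u\wedge u<z)]$. The previous case gives $[x+y=u]\leq (x+y=u)^{\mathfrak{C}}$, and meets and existential quantification are monotone in the Lindenbaum algebra. Hence $[x+y<z]\leq \exists u((x+y=u)^{\mathfrak{C}}\wedge[u<z]) = (x+y<z)^{\mathfrak{C}}$. The case $x\cdot y<z$ is the same.

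\textbf{Inductive steps.} For $\vee$, $\wedge$, $\exists$, $\forall$, the map $\phi\mapsto[\phi]$ commutes with these connectives, and $\mathfrak{C}$ is defined to commute with them as well. Each of these operations is monotone on the power set of instantiations: union, intersection, cylindrification, and the dual of cylindrification. So if $[\phi]\leq\phi^{\mathfrak{C}}$ and $[\psi]\leq\psi^{\mathfrak{C}}$, then for instance $[\phi\vee\psi]=[\phi]\vee[\psi]\leq\phi^{\mathfrak{C}}\vee\psi^{\mathfrak{C}}=(\phi\vee\psi)^{\mathfrak{C}}$, and likewise for the others.

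\textbf{Main obstacle.} The substantive point is exactly where positivity is used. Negation is antitone, so the induction would fail for $\neg$; this is why the theorem is restricted to positive formulas. The step needing most care is the reduction to flat form. One must check that the translation of the lemma can be carried out without introducing negation, and that for equivalent flat translations the coarse interpretation is well-defined, or at least that the inequality holds for the chosen translation. I would handle this by fixing the canonical translation that introduces a fresh existentially quantified variable for each compound subterm. The resulting flat formula is positive and precisely equivalent to the original, and the induction above then applies.
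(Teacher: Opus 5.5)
Your proposal is correct and follows the paper's own proof: the $L_p^{min}$ atoms are interpreted precisely; the atoms $x+y=z$ and $x\cdot y=z$ are handled by noting that the precise class lies below both the residue conjunct and $[\sigma]$ (resp.\ $[\pi]$); the $<$-variants follow from monotonicity of $\exists$ and $\wedge$; and $\vee,\wedge,\exists,\forall$ go through by monotonicity. You also make explicit some details the paper leaves implicit: the reduction to a positive flat form using existential witnesses, and the use of the full $(x+y=u)^{\mathfrak{C}}$ rather than only $[\sigma(x,y,u)]$ in the $x+y<z$ case.
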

\textbf{Proof:} Consider the Lindenbaum algebra, $L_p/Z$. For formulas in $L_p^{min}$ this is clear as the interpretations are the same. For atomic formulas in $L_p$:
\begin{description}
%% \item $(s(x) = y)^{\mathfrak{C}} = \left[ \displaystyle\bigvee_{0 \leq a < p-1} (R_a(x) \wedge R_{(a+1 \mod p)}(y)) \wedge x \leq y \right]$ \\
%%   $>$ $\left[ \displaystyle\bigvee_{0 \leq a < p-2} (R_a(x) \wedge R_{(a+1 \mod p)}(y)) \wedge x = y \right] \vee \left[ R_{p-1}(x) \wedge R_0(y) \wedge s(x) = y \right]$
%%   $=$ $\left[ s(x) = y \right]$ the latter being the standard interpretation.
%% \item $(s(x) < y)^{\mathfrak{C}} = \exists z ((s(x) = z)^{\mathfrak{C}} \wedge [z < y]^{\mathfrak{C}}) > \left[ x < y \right]$ the latter follows since we just proved that $(s(x) = z)^{\mathfrak{C}}$ is below $[ s(x) = z ]$ and since $[ x < y]^{\mathfrak{C}} = [ x < y ]$  
%%   \item $(s(x) > y)^{\mathfrak{C}} = \exists z ((s(x) = z)^{\mathfrak{C}} \wedge [z > y]^{\mathfrak{C}}) > \left[ x < y \right]$ for the same reason as above.
\item $(x + y = z)^{\mathfrak{C}} = \left[ \displaystyle\bigvee_{0 \leq a < p} \left( \bigvee_{0 \leq b < p} R_a(x) \wedge R_b(y) \wedge R_{a + b \mod p}(z) \right) \right] \wedge [\psi(x,y,z)]$. The precise interpretation is clearly below each of the conjuncts, so by monotonicity
  of conjunction the precise interpretation is below. 
    %% where $\psi(x,y,z)$ denotes the formula
    %% $((0 < y \wedge x \leq z) \vee (y = 0 \wedge x = z) \vee (y < 0 \wedge z \leq x))$.
    %% The formula $\psi$ is designed so as to contain the graph of the addition function, see figure \ref{fig:addition}
  \item $(x + y < z)^{\mathfrak{C}} =  \exists u ([\sigma(x,y,u)] \wedge [u < z])$. This follows from the monotonicity of $\exists$,$\wedge$ the assumption on $\sigma$ and the fact that $x > z$ is in $L_p^{min}$ so its interpretation is precise.
  \item $(x + y > z)^{\mathfrak{C}} =  \exists u ((x + y = u)^{\mathfrak{C}} \wedge [u > z])$. This case is as above.
  \item $(x \cdot y = z)^{\mathfrak{C}} = \left[ \displaystyle\bigvee_{0 \leq a < p} \left( \bigvee_{0 \leq b < p} (R_a(x) \wedge R_b(y) \wedge R_{a \cdot b \mod p}(z)) \right) \right] \wedge [\psi(x,y,z)]$. This case is similar to addition.
    %% where $\psi(x,z)$ denotes the formula
    %% $((0 < y \wedge x \leq z) \vee (0 = y \wedge z = 0) \vee (0 = x \wedge z = 0) \vee (y < 0 \wedge z \leq y) \vee (x < 0 \wedge y < 0 \wedge z > 0))$
    %% The formula $\psi$ is designed so as to contain the graph of the multiplication function, see figure \ref{fig:multiplication}
  \item $(x \cdot y < z)^{\mathfrak{C}} =  \exists u ((x \cdot y = u)^{\mathfrak{C}} \wedge [u < z])$. Proven as the corresponding case for addition.
  \item $(x \cdot y > z)^{\mathfrak{C}} =  \exists u ((x \cdot y = u)^{\mathfrak{C}} \wedge [u > z])$. Proven as above.
  \item For $\vee$ and $\wedge$ this follows from monotonicity.
  \item For $\forall$ and $\exists$ this also follows from monotonicity.
\end{description}
\hfill{Q.E.D.}

\begin{corrollary}
If a diophantine inequality is unsolvable over some coarse interpretation then it is unsolvable in the standard sense.
\end{corrollary}
\textbf{Proof:} As mentioned earlier, if an equivalence class is below an other in the Lindenbaum algebra then its interpretation in the Tarskian sense is included in the other. Now if a given diophantine equation, i.e. a positive first-order fomlula, is unsolvable over some coarse interpretation then it is mapped to the bottom of $L^{min}_p/Z$, and this is the equivalence-class that corresponds to the empty set (of assignments of integers to variables). Since, by the last theorem, the standard interpretation is below the coarse interpretation, the interpretation of the given formula is the empty-set, i.e. it is unsolvable.
\hfill{Q.E.D.}

\section{Final remarks and extensions}
We have seen that we can decide whether diophantine inequalities are solvable in coarse interpretations. I believe that decision procedures can be implemented on a personal computer and used to study diophantine inequalities much as one studies diophantine equations by interpreting them in the integers modulo $p$, see e.g. the introductory chapters of the classic book by L. J. Mordell \cite{mordell1969diophantine}. Search for interpretations in algebras isomorphic to sub algebras of Lindenbaum algebras has been implemented and shown to work for formulas up to at least 3 variables, Rognes\cite{rognes2009turning} 

It should be possible to expand the $L_p^{min}$ with symbols for linear functions, such as $2x, 3x, 2x+1, 3x+1$ etc, keeping coarse interpretations decidable using e.g. a decision procedure for Presburger arithmetic. It remains open whether the latter extension would result in a more powerful way of occasionally establishing insolvability. Also computation in the Lindenbaum algebra may be more time consuming due to higher complexity.

The central properties of the present constructions are that we have a small language whose standard interpretation is decidable and a finite partition of the standard model, here the residue classes. It should be possible to carry the constructions over to expansions of other decidable theories with finitely partitioned standard models.

%% FØLGENDE MÅ PRESISERES
%% If the set of $L_p^{min}$ sentences true of the integers allows quantifier elimination then coverings defined by iteration (p.r.) are definable by a finite
%% number of iterates.

%% FØLGENDE ER AKTUELT HVIS EN ELLER ANNEN FORM FOR ITERASJON (P.R.?) ER MULIG.
%% In regards to the power, I propose a conjecture analogous to an open problem, known as the Skolem problem, regarding linear recurrence relations, see e.g. \cite{akshay2024robustness} for a fairly updated  overview into the state of problems related to the Skolem problem, including problems involving inequalities.
%% \begin{conjecture}
%% The coarse interpretations based on $\sigma$ and $\pi$, as defined in example \ref{minimal}, suffice for establishing the unsolvability of any unsolvable system of linear equations and inequaities over the integers.
%% \end{conjecture}

\bibliographystyle{plain}
\bibliography{gen_int_p}

\end{document}